\documentclass[notitlepage, 11pt]{article}

\usepackage{color}
\usepackage{amsmath}
\usepackage{latexsym,amssymb}
\usepackage{graphicx}
\usepackage{stmaryrd}
\usepackage{fixmath}

 \topmargin      0.0truein
 \oddsidemargin  0.0truein
 \evensidemargin 0.0truein
 \textheight     8.5truein
 \textwidth      6.2truein
 \headheight     0.0truein
 \headsep        0.3truein

\newtheorem{lemma}{Lemma}
\newtheorem{theorem}{Theorem}

\newtheorem{problem}{Problem}


\newcommand{\noi}{\noindent}

\newcommand{\R}{\mathbb{R}}
\newcommand{\N}{\mathbb{N}}

\newcommand{\la}{\lambda}
\newcommand{\sig}{\sigma}

\newcommand{\eps}{\varepsilon}
\newcommand{\ph}{\varphi}

\newcommand{\al}{\alpha}

\newcommand{\io}{\iota}

\newcommand{\del}{\delta}

\newcommand{\Gam}{\mathnormal{\Gamma}}

\newcommand{\Sig}{\mathnormal{\Sigma}}

\newcommand{\Ps}{\mathnormal{\Psi}}

\newcommand{\Om}{\mathnormal{\Omega}}

\newcommand{\C}{{\mathbb C}}
\newcommand{\D}{{\mathbb D}}

\newcommand{\Z}{{\mathbb Z}}

\newcommand{\EE}{{\mathbb E}}

\newcommand{\PP}{{\mathbb P}}

\newcommand{\bdel}{\mathbold{\delta}}

\newcommand{\calF}{{\cal F}}

\newcommand{\calS}{{\mathcal{S}}}

\newcommand{\be}{{\mathbf e}}

\newcommand{\skp}{\vspace{\baselineskip}}

\newcommand{\diag}{{\rm diag}}

\newcommand{\w}{\wedge}

\newcommand{\To}{\Rightarrow}

\newcommand{\iy}{\infty}

\newcommand{\qed}{\hfill $\Box$}


\begin{document}

\title{A note on non-existence of diffusion limits for
serve-the-longest-queue when the buffers are equal
in size}

\author{Rami Atar\thanks{Department of Electrical Engineering,
Technion--Israel Institute of Technology, Haifa 32000, Israel}
\thanks{Research supported in part by the ISF (grant 1315/12)}
\and
Subhamay Saha${}^*{}^\dagger$
}

\date\today

\maketitle

\begin{abstract}
We consider the serve-the-longest-queue discipline for a multiclass queue with buffers
of equal size, operating under (i) the conventional and (ii) the Halfin-Whitt
heavy traffic regimes, and show that while the queue length process' scaling limits
are fully determined by the first and second order data in case (i),
they depend on finer properties in case (ii). The proof of the latter relies on
the construction of a {\it deterministic} arrival pattern.
\end{abstract}

\section{Introduction}\label{sec1}

We analyze the multi-class queue in two different diffusion
regimes, namely the {\it conventional} and the {\it Halfin-Whitt} (HW)
heavy traffic regimes, operating under the serve-the-longest-queue (SLQ) scheduling policy.
In both regimes the traffic intensity is asymptotic to unity, where in conventional
heavy traffic, the model is based on a single server
and the arrival rate and service time distributions are scaled up,
while in the HW regime, the arrival rate and
number of servers are scaled up and the service time distributions are kept fixed;
see \cite{ata-nds} and references therein for more on these regimes.
Our goal is to demonstrate that if the buffers are finite and of equal size,
then, perhaps counterintuitively,
the first and second order data of the underlying primitive processes do not uniquely determine
the queue length asymptotics in the HW regime
(the term `first and second order data of the underlying primitive processes'
informally means their Law of Large Numbers and Central Limit Theorem
limit laws; it is rigorously defined in Section \ref{sec2}).
As a result, a diffusion limit does not always exist under the `usual' set of assumptions.
This stands in contrast to the conventional regime where, as we show,
the limit is fully determined by the first and second order data.

Our motivation to study systems with finite buffers stems from
a recent treatment \cite{ata-sah-1}, where they arise in a game-theoretic setting
of customers that act strategically, and avoid joining the queue
if they expect that the delay will exceed a threshold.
In that setting, determining the diffusion-scale asymptotics
of the queue length provides a crucial step in the analysis of a Nash equilibrium.
The usual role played by finite buffers, namely to model finite storage room,
provides, of course, an additional motivation.

In Sections \ref{sec2} and \ref{sec3} we treat the HW and the conventional regimes,
respectively, where in the former we provide a counterexample
to existence of limits, and in the latter we determine the limit.
The aforementioned counterexample is based on the construction
of a certain deterministic arrival pattern; the problem of whether existence of limits
fails under a more common model for arrivals, such as renewal processes
with no fixed times of discontinuity, is left open (see Problem \ref{prob1}).

\skp

We use the following notation. For $a,b\in\R$, the maximum [resp., minimum] is denoted by
$a\vee b$ [resp., $a\w b$], and $a^+=a\vee0$, $a^-=(-a)\vee0$.
For $x,y\in\R^k$ ($k$ a positive integer), $x\cdot y$
and $\|x\|$ denote the usual scalar product and $\ell_2$ norm, respectively.
Write $\{\be_i\}$, $i=1,\ldots,k$ for the standard basis in $\R^k$
and $1$ for $\sum_{i=1}^k\be_i$.
Denote $\R_+=[0,\iy)$, and let $\io:\R_+\to\R_+$ the identity.
For $f:\R_+\to\R^k$, $\|f\|_T=\sup_{t\in[0,T]}\|f(t)\|$,
and, for $\theta>0$,
$w_T(f,\theta)=\sup_{0\le s<u\le s+\theta\le T}\|f_u-f_s\|$.
For a Polish space $\calS$, let $\C_\calS([0,T])$ and $\D_\calS([0,T])$
denote the set of continuous and, respectively, cadlag functions $[0,T]\to\calS$.
Write $\C_\calS$ and $\D_\calS$ for the case where $[0,T]$ is replaced by $\R_+$.
Endow $\D_\calS$ with the Skorohod $J_1$ topology. Write $X_n\To X$ for convergence in
distribution. A sequence of processes $X_n$ with sample paths in
$\D_\calS$ is said to be {\it $C$-tight} if it is tight and every subsequential
limit has, with probability 1, sample paths in $\C_\calS$.
For a positive integer $k$, $m\in\R^k$ and a symmetric, positive matrix $A\in\R^{k\times k}$,
{\it an $(m,A)$-Brownian motion} (BM) is a $k$-dimensional BM starting from zero,
having drift $m$ and infinitesimal covariance matrix $A$.

\section{\textbf{A counterexample to existence of limits in the Halfin-Whitt regime}}\label{sec2}

A sequence of queueing models, indexed by $n\in\N$, and defined on a probability
space $(\Om,\calF,\PP)$,
has $n$ identical servers and a fixed number, $N\ge2$, of buffers
dedicated to customers of $N$ classes.
For $i=1,2,\ldots,N$, class-$i$ customers arrive according to an arrival process $E^n_i$
and upon arrival go directly for service on the event that any of the servers is available,
and otherwise are queued in buffer $i$ if the buffer is not fully occupied. Arrivals are lost
when the corresponding buffer is full.
When a server becomes available and the buffers are non-empty,
it picks a customer from the buffer with most customers,
and, in case of equal maximal queue lengths, a fair
$N$-coin is tossed to determine which buffer to pick from.
Class-$i$ jobs take exponential time to process, with parameter $\mu_i^n$, where
\begin{equation}\label{10}
\mu^n_i=\mu_i+n^{-1/2}\hat\mu_i+o(n^{-1/2}),
\end{equation}
and $\mu_i>0$ and $\hat\mu_i\in\R$ are constants.
The arrival counting processes, $E^n_i$, are assumed to satisfy the Law of Large Numbers,
\begin{equation}\label{12}
\bar E^n_i:=n^{-1}E^n_i\To\la_i\io,
\end{equation}
where $\la_i>0$ are constants, and the Central Limit Theorem,
\begin{equation}\label{13}
\hat E^n_i:=n^{-1/2}(E^n_i-\la^n_i\io)\To W_i^{\text{arr}},
\end{equation}
where $\la^n_i=\la_i n+n^{1/2}\hat\la_i+o(n^{1/2})$, and $W_i^{\text{arr}}$ is a $(0,\lambda_i\sig_i^2)$-BM,
for constants $\hat\la_i\in\R$, $\sig^2_i\ge0$.
It is also assumed that arrival processes are independent.
The resulting asymptotic
traffic intensity is given by $\sum_i\rho_i$, where $\rho_i=\la_i/\mu_i$,
assumed to satisfy the critical load condition
$\sum_i \rho_i=1$.
The queue length processes are denoted by $Q^n=(Q^n_1,\ldots,Q^n_N)$.
The number of class-$i$ customers in the system (resp., in the buffer, in service)
at time $t$ is denoted by $X^n_i(t)$ (resp., $Q^n_i(t)$, $\Ps^n_i(t)$).
Note that $X^n=Q^n+\Ps^n$, and $1\cdot\Ps^n\le n$.
Diffusion scaled versions of these processes are denoted by
\[
\hat X^n=n^{-1/2}(X^n-n\rho),\qquad \hat Q^n=n^{-1/2}Q^n,\qquad \hat\Ps^n=n^{-1/2}(\Ps^n-n\rho).
\]
It is assumed that the initial condition $\hat X^n(0)$ satisfies
\[
\hat X^n(0)\To X_0,
\]
where $X_0$ is an $\R^N$-valued r.v., whose distribution is denoted by $m_0$,
and, for simplicity, the queue lengths are assumed to start at zero,
that is, $Q^n(0)=(Q^n_1(0),\ldots,Q^n_N(0))=0$.
We will assume that the buffer sizes, denoted throughout by $\{\beta^n_i\}$,
are asymptotic to $\{\beta_in^{1/2}\}$, where $\beta_i>0$ are constants, namely
$\beta^n_i=\beta_in^{1/2}+o(n^{1/2})$.

The tuples $(\mu_i,\la_i)$ and $(\hat\mu_i,\hat\la_i,\sig_i^2,m_0)$ are often
referred to as {\it first} and {\it second} order data, respectively.
We denote them jointly by
\[
\bdel=(\mu_i,\la_i,\hat\mu_i,\hat\la_i,\sig_i^2,m_0).
\]

Given $k\in\{1,\ldots,N\}$,
consider a stochastic differential equation (SDE) with reflection, for a process $X$ that
lives in
\[
G_k=\{x\in\R^N:1\cdot x\le N\beta_k\},
\]
and reflects on the boundary of $G_k$ in the direction $-\be_k$.
Let $\{W(t)\}$ be a $(\hat\la,A)$-BM, where
$A=\diag(\lambda_i(\sig_i^2+1))$.
Let $b:\mathbb{R}^N\rightarrow \mathbb{R}^N$ be given by
\begin{equation}\label{65}
b(x)=-(\mu_1(x_1-N^{-1}(1\cdot x)^+),\ldots,\mu_N(x_N-N^{-1}(1\cdot x)^+)).
\end{equation}
Let $(X,L)=(X^{(k)},L^{(k)})$ be the unique pair of processes that is adapted to the filtration
$\sig\{X_0\}\vee\sig\{W(u),u\le t\}$, where $X$ has sample paths in $\mathbb{C}(\R_+:G_k)$,
$L$ has nondecreasing sample paths in $\mathbb{C}(\R_+:\R_+)$,
and the pair satisfies a.s.,
\begin{equation}\label{61}
\begin{split}
&X(t)=X_0+W(t)+\int_0^tb(X(u))du-L(t)\be_k,\qquad t\ge0,\\
&\int_{[0,\iy)}1_{\{1\cdot X(t)<N\beta_k\}}dL(t)=0\,.
\end{split}
\end{equation}
The existence and uniqueness of such a pair follows from Proposition 3 of
\cite{andore} on noting that $b$ is Lipschitz continuous.
We denote by $X^{(k)}$ the solution to the SDE \eqref{61}.

It follows from the results of \cite{ata-sah-1} that
the limits of $(\hat X^n,\hat Q^n,\hat\Ps^n)$ are not uniquely determined by $\bdel$
when the buffer sizes are asymptotically equal, i.e., $\beta_i=\beta_1$ for all $i$.
More precisely, the following result appears in \cite{ata-sah-1} (Proposition 4.3):

{\it Assume that for some $k$ and all $i\ne k$, $\beta_k<\beta_i$.
Then $(\hat X^n,\hat Q^n,\hat\Ps^n)\To(X,Q,\Ps)$, where $X=X^{(k)}$ is the unique solution
of \eqref{61}, and $Q$ and $\Ps$ are recovered from it via $Q=N^{-1}(1\cdot X)^+$ and $\Ps=X-Q$.
}

One can draw from this result the following conclusions
regarding the case $\beta_i=\beta_1$ for all $i\in\{1,\ldots,N\}$:
\begin{itemize}
  \item[(i)] For every $k$, one can choose $\{\beta^n_i\}$ asymptotic to $\{\beta_in^{1/2}\}$,
  in such a way that
  $\hat X^n\To X$, where $X=X^{(k)}$. {\it Thus the first and second order data do not determine
  the limits.}
  \item[(ii)] One can choose $\{\beta^n_i\}$ asymptotic to $\{\beta_in^{1/2}\}$
  in such a way that
  $\hat X^n$ do not converge in distribution. {\it Thus limits need not exist.}
\end{itemize}
Indeed, (i) follows because, given $k$, we have $\hat X^n\To X^{(k)}$
when $\beta_k=c-\eps$ and $\beta_i=c$ for all $i\ne k$,
with $c,\eps>0$ fixed; hence by a diagonal argument, the same is true with $\eps$
replaced by $\eps_n>0$, for some $\eps_n\to0$.
Of course, (ii) is immediate from (i).

In the present note, we are interested in the case where $\beta^n_i$
are {\it exactly equal to each other}, for every $n$.
Assuming in what follows that for a constant $\beta_1>0$,
\begin{equation}
  \label{30}
  \beta^n_i=\beta^n:=\lfloor\beta_1n^{1/2}\rfloor,\qquad i\in\{1,\ldots,N\},\, n\in\N,
\end{equation}
we ask whether, in this situation,
the first and second order data still fall short of determining the limit behavior.
More precisely, we aim at addressing the following assertions:
\begin{itemize}
  \item[(i')] For every $k$ one can choose $\{E^n_i\}$ that satisfy \eqref{12}
  and \eqref{13}, in such a way that $\hat X^n\To X$, where $X=X^{(k)}$.
  \item[(ii')] One can choose $\{E^n_i\}$ that satisfy \eqref{12} and \eqref{13},
  in such a way that $\hat X^n$ do not converge.
\end{itemize}
An affirmative answer will confirm that the first and second order data do not determine the
limits even when the buffers are exactly equal in size. We address these questions in the special case
where $N=2$, but it will be clear from the proof that analogous treatment
is possible in general.
\begin{theorem}
  \label{th1}
  Consider $N=2$ and assume that the buffer sizes are given by \eqref{30}.
  Fix $k\in\{1,2\}$.
  Then one can find $\{E^n_1\}$ and $\{E^n_2\}$ satisfying \eqref{12} and \eqref{13},
  so that $\hat X^n\To X^{(k)}$, the solution of the SDE \eqref{61}.
\end{theorem}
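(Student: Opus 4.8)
The plan is to exploit the fact that, when the buffers are exactly equal, all the candidate limits share the same domain $G=\{x:1\cdot x\le N\beta_1\}$, the same drift $b$ of \eqref{65}, and the same driving Brownian motion $W$; the sole feature distinguishing $X^{(k)}$ from $X^{(k')}$ is the reflection direction $-\be_k$. Since in the prelimit the reflection term is carried by the blocking (loss) process at a full buffer, the whole task reduces to constructing arrival processes — obeying \eqref{12} and \eqref{13} — that force, at the diffusion scale, the blocking to concentrate on class $k$. Because the arrivals are to be deterministic, I would take $\sig_i^2=0$, so that \eqref{13} holds in the degenerate form $\hat E^n_i\To0$ and the only surviving randomness in the limit comes from the exponential service times, consistently with $A=\diag(\la_i(\sig_i^2+1))=\diag(\la_i)$.

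The guiding heuristic is the mechanism behind Proposition 4.3: there the strictly smallest buffer acts as the bottleneck of the SLQ dynamics — it is the buffer whose fullness binds the equalized queue length, hence whose class absorbs all the blocking, producing reflection in $-\be_k$. The idea is to reproduce this effect through arrival timing rather than buffer size, i.e.\ to build a deterministic pattern that makes buffer $k$ behave as the effective bottleneck even though every buffer has the same capacity $\beta^n=\lfloor\beta_1 n^{1/2}\rfloor$. The essential point is a separation of scales: under SLQ the scaled queues all share the common limit $N^{-1}(1\cdot X)^+$, so an $O(1)$-customer bias between the buffers is invisible to $\hat Q^n$ and to the scaled arrival processes $\hat E^n_i$, yet it is exactly at the single-customer level that the SLQ tie-breaking and the bookkeeping of which class is blocked are decided. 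I would therefore superimpose on rate-$\la^n_i$ deterministic base streams a fine structure — most naturally, class-$k$ arrivals arranged in small deterministic batches at fixed times — designed so that whenever $1\cdot X^n$ is within $O(1)$ of its maximum it is buffer $k$ that is saturated while the remaining buffers retain a vacancy, and hence the overflow falls on class $k$. The batches are kept of size $o(n^{1/2})$ and suitably spread, so that the induced jumps in $\hat E^n_i$ vanish in the limit and \eqref{12}, \eqref{13} are preserved; the presence of fixed times of discontinuity here is precisely what distinguishes the construction from renewal arrivals and is the reason the analogous question for the latter is left open (Problem \ref{prob1}).

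With such arrivals in hand, I would run the heavy-traffic machinery of \cite{ata-sah-1}: first establish $C$-tightness of $\hat X^n$ and identify, for every subsequential limit, an equation of the form \eqref{61} with the common drift $b$ and BM $W$ and a nondecreasing reflection term supported on $\{1\cdot x=N\beta_1\}$, whose direction is the only undetermined ingredient. I would then show that the scaled class-$i$ blocking processes converge to $0$ for every $i\ne k$, while that of class $k$ converges to the boundary local time $L$, so that every subsequential limit solves \eqref{61} with reflection $-\be_k$; the uniqueness furnished by Proposition 3 of \cite{andore} then upgrades tightness to the full convergence $\hat X^n\To X^{(k)}$.

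The main obstacle is this middle step — proving that the microscopic deterministic bias genuinely pins the blocking to class $k$ at the diffusion scale. The difficulty is intrinsic to SLQ: the serve-the-longest rule strongly equalizes the queues and, together with the fair tie-breaking coin, tends to spread the blocking symmetrically across classes, so one must show that the deterministic batch structure overcomes this averaging along every excursion of $1\cdot\hat X^n$ to the boundary. Quantitatively, I would need uniform control, over these (randomly timed) excursions, of the excess of buffer $k$'s occupancy over the others, together with an estimate that the total class-$i$ overflow for $i\ne k$ is $o(n^{1/2})$ while the class-$k$ overflow accounts for the full local time — all while the timing perturbation remains negligible for \eqref{13}. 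This excursion-level concentration estimate, and the verification that the deterministic (rather than stochastic) nature of the pattern is what makes it succeed, is the technical heart of the argument.
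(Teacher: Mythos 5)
Your high-level skeleton matches the paper's: take deterministic arrivals with $\sig_i^2=0$, show that the scaled losses of every class $i\ne k$ vanish so that the reflection term is carried entirely by the class-$k$ loss process, then invoke the machinery of \cite{ata-sah-1} (state space collapse, convergence of the driving terms) together with the Lipschitz continuity/uniqueness of the reflection map to identify the limit as $X^{(k)}$. The problem is that your proposal stops precisely where the proof actually lives: you never exhibit a concrete arrival pattern, and you explicitly defer the concentration estimate --- which you yourself call ``the technical heart'' --- that the blocking falls on class $k$. That estimate is not a routine verification to be filled in later; it is the entire content of Theorem \ref{th1}, and without it you have a plan, not a proof.

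Moreover, the mechanism you sketch --- maintaining an $O(1)$-customer saturation bias so that buffer $k$ is full while the others retain a vacancy whenever $1\cdot X^n$ is near its maximum --- is unlikely to work as stated. SLQ actively erases such a bias (it serves the longest queue), and, quantitatively, the system makes polynomially many (in $n$) approaches to the boundary during $[0,T]$, so you need the per-approach probability that a class-$i$ arrival ($i\ne k$) finds its own buffer full to be superpolynomially small; an $O(1)$ bias cannot beat the Poisson fluctuations of the service process at that level. The paper's construction resolves exactly this point by a temporal, not occupancy, separation: the arrivals are periodic with period $\tau=m/n$, $m=\lfloor n^a\rfloor$, $0<a<\tfrac12$. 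In each period, (a) the first half has no arrivals at all, so the services drain both queues to roughly $\lfloor S(m)/2\rfloor$ below $\beta$; (b) in the second half, class $2$ arrives smoothly at rate $2n$, which exactly matches the total service capacity, so whenever $Q^n_2$ climbs toward $\beta$ it is necessarily the longer queue and receives all service effort, and by a sample-path large deviations estimate for the Poisson process the probability that it covers the $\Theta(m)$ gap back to $\beta$ is at most $c_1e^{-c_2m}$; (c) class $1$ arrives in a single batch of size $m$ at the end of the period, which is where the losses occur. The choice $a\in(0,\tfrac12)$ is what makes everything fit: $ne^{-c_2m}\to0$ so the union bound over the $\sim Tn/m$ periods closes, while $m=o(n^{1/2})$ keeps the batches invisible at diffusion scale and ensures $m<\beta$. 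This drain-and-rate-matching structure, together with the LDP estimate, is the idea missing from your proposal, and it is also why your worry about the tie-breaking coin dissolves: class $2$ is simply never near $\beta$ when its arrivals occur, with overwhelming probability.
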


Note that the domain $G_k$ does not depend on $k$ in this case, since $\beta_1=\beta_2$.
However, the SDEs still differ in terms in the direction of reflection,
and in this situation the solutions $X^{(1)}$ and $X^{(2)}$ are not equal in law.
Hence the validity of (i') and (ii') is an immediate consequence of the above result.

As mentioned earlier,
the proof of the result will be based on the construction of a deterministic
arrival pattern. It is natural to ask
whether the result remains valid under the additional requirement that
the arrivals follow a more common model, such as renewals. More precisely,
we formulate the following problem, that we leave open.
\begin{problem}
  \label{prob1}
  Determine whether existence of limits may fail when the arrivals are given
  by accelerated versions of independent renewal processes (namely, $E^n_i(t)=E_i(\mu^n_it)$,
  $t\ge0$) with inter-renewal distributions that have density.
\end{problem}


\noi{\bf Proof of Theorem \ref{th1}:}
The construction will be with the parameters $\la_i=1$, $\hat\la_i=0$, $\mu_i=2$,
$\hat\mu_i=0$, $\rho_i=1/2$.
The arrival processes we construct are deterministic, and satisfy \eqref{12},
as well as \eqref{13} with $\sig_i=0$. In particular, the driving BM in \eqref{61}
is a $(0,A)$-BM where $A=\diag(\la_i)=\diag(1,1)$.
The construction is presented for $k=1$; the case $k=2$ is obtained by interchanging the roles
of class 1 and class 2.

Fix a sequence $m_n=\lfloor n^a\rfloor$, $n\in\N$, where $a\in(0,\frac{1}{2})$ is constant.
For ease of notation we suppress the index $n$ in $m_n$ and $\beta^n$ (of \eqref{30})
and write $m$ and $\beta$, respectively.

First, we construct $E^n_i$ on the interval $[0,\tau]$, where $\tau=\frac{m}{n}$,
by letting
\begin{align}
\notag
E^n_1(t)&=\begin{cases}
0,& t\in[0,\tau),\\
m,& t=\tau,
\end{cases}
\\
\label{40}
E^n_2(t)&=\begin{cases}
  0, & t\in[0,\frac{\tau}{2}),\\
  \lfloor2n(t-\frac{\tau}{2})+2\rfloor, & t\in[\frac{\tau}{2},\tau),\\
  m, & t=\tau.
\end{cases}
\end{align}
Thus, for each class, $m$ arrivals occur during $[0,\tau]$,
where class-1 customers all arrive at time $\tau$, whereas class-2 arrivals
are at $\frac{\tau}{2}, \frac{\tau}{2}+\frac{1}{2n},\frac{\tau}{2}+\frac{2}{2n},
\ldots,\frac{\tau}{2}+\frac{m-1}{2n}$.
Beyond $[0,\tau]$, the pattern defined on $(0,\tau]$ repeats itself with period $\tau$.
Namely, $E^n$ is given by
\[
E^n(t+j\tau)=E^n(j\tau)+E^n(t),\qquad t\in(0,\tau],\,j\in\N.
\]
Note that for both $i=1,2$, \eqref{12} holds with $\la_i=1$, and \eqref{13}
holds with $\la^n_i=n$ and $\sig^2_i=0$ (thus $W^{\rm arr}_i=0$ a.s.).
The parameters $\mu^n_i$ are given by $\mu^n_i=2$ for $n\in\N$, $i=1,2$.

We need some additional notation. Denote by $R^n_i$ the counting process for class-$i$
losses since time 0,
by $B^n_i$ the counting process for class-$i$ customers sent to the service pool
since time 0, by $D^n_i$ the counting process for class-$i$ departures from service,
and by $S$ a unit-rate Poisson process representing potential service.
Namely,
\begin{equation}\label{20}
1\cdot D^n(t)=S\Big(2\int_0^t1\cdot\Ps^n(u)du\Big)\,.
\end{equation}
We have the following balance equations
\begin{align}\label{50}
Q^n_i(t)=Q^n_i(0)+E^n_i(t)-B^n_i(t)-R^n_i(t)\,,
\end{align}
\begin{align}\label{19}
\Ps_i^n(t)=\Ps_i^n(0)+B^n_i(t)-D_i^n(t)\,.
\end{align}
Denote $\hat R^n=n^{-1/2}R^n$.
The main estimate will be to show that $\hat R^n_2\To0$.
Fix $T$ and note that
\begin{equation}\label{44}
\EE R^n_2(T)\le\sum_{j=0}^{\lfloor T/\tau\rfloor}\EE[R^n_2(j\tau+\tau)-R^n_2(j\tau)]
\le\sum_{j=0}^{\lfloor T/\tau\rfloor}m\PP[\sup_{t\in[j\tau+\frac{\tau}{2},j\tau+\tau)}Q^n_2(t)=\beta],
\end{equation}
where we used the fact that a class-2 loss can only occur if a customer arrives
when the buffer is full (that is, $Q^n_2=\beta$), that class-$2$
arrivals occur only within $[j\tau+\frac{\tau}{2},j\tau+\tau)$,
and that the total number of losses over each such interval is bounded by $m$.

Towards bounding the RHS of \eqref{44},
note that, by construction, for each $n$, the tuple
$\Sig^n:=(Q^n_1,Q^n_2,\Ps^n)$ forms an inhomogeneous Markov process on the state space
\[
\calS^n:=\{(q_1,q_2,\psi)\in\Z_+^3:q_1\vee q_2\le\beta,\psi\le n, (q_1+q_2)\w(n-\psi)=0\},
\]
where the first constraint expresses the buffer limit, the second states that the
number of jobs in service does not exceed the number of servers, and the last
corresponds to the non-idling condition (the inhomogeneity is due to the structure
of arrivals).
Denote by $\PP^n_x$, $x\in\calS^n$, the corresponding Markov family,
where $x$ serves as the initial condition, i.e., $\PP^n_x(\Sig^n(0)=x)=1$.
Although $\Sig^n$ is not a homogeneous Markov process,
The path-valued Markov chain $\{\Sig^n|_{(j\tau,j\tau+\tau]}\}$, $j\in\N$ is homogeneous by
construction, and in particular,
\begin{equation}\label{45}
\PP[\sup_{t\in[j\tau+\frac{\tau}{2},j\tau+\tau)}Q^n_2(t)=\beta|\Sig^n(j\tau)=x]
=\PP^n_x(\sup_{t\in[\frac{\tau}{2},\tau)}Q^n_2(t)=\beta).
\end{equation}
Below, we show that
\begin{equation}\label{31}
\sup_{x\in\calS^n}\PP^n_x(\sup_{t\in[\frac{\tau}{2},\tau)}Q^n_2(t)=\beta)\le c_1e^{-c_2m},
\end{equation}
where $c_1,c_2>0$ are constants that do not depend on $n$ or $k$.
(Note that the initial condition $x$ could have $q_2=\beta$, but this
does not contradict \eqref{31} which is a statement regarding
the times $[\tau/2,\tau)$.)
Combining \eqref{45} with the estimates \eqref{44} and \eqref{31} gives
\[
\EE R^n_2(T)\le c_1\frac{T}{\tau}me^{-c_2m}=c_1Tne^{-c_2m}.
\]
Recalling that $m=\lfloor n^a\rfloor$, where $a>0$, gives $R^n_2(T)\To0$ as $n\to\iy$,
and therefore $\hat R^n_2(T)\To0$.

The intuitive explanation of \eqref{31} is simple. During the first half of the period,
$(0,\tau/2)$, there are no arrivals, and both queue lengths drop dramatically below the
buffer size $\beta$, regardless of their initial condition. On $[\tau/2,\tau)$,
there are still no class-1 arrivals,
and so if $Q^n_2$ comes near $\beta$, it is necessarily the longer among the two queues.
At these times, class-2 jobs receive all service effort, which again causes $Q^n_2$ to drop.

To prove \eqref{31}, fix $x\in\calS^n$.
Denote $\theta=\inf\{t\in[\tau/2,\tau):Q^n_2(t)=\beta\}$.
The event indicated in \eqref{31} can be written as $\{\theta<\iy\}$
(equivalently, $\{\theta\le\tau\}$).
Note first that on that event, it is impossible to have $1\cdot\Ps^n(s)<n$ for some
$s\in[0,\theta]$, when $n$ is sufficiently large. Namely, if
$n$ is large then $m=m_n=\lfloor n^a\rfloor<\beta=\beta^n=\lfloor\beta_1n^{1/2}\rfloor$. Note that
non-idling condition can be expressed as
\[
\text{for every $t$, } 1\cdot Q^n(t)>0 \text{ implies } 1\cdot\Ps^n(t)=n.
\]
Hence the existence of such $s$ implies $Q^n_2(s)=0$, and thus by \eqref{50},
\[
\beta-0=Q^n_2(\theta)-Q^n_2(s)\le E^n_2(\theta)-E^n_2(s)\le m,
\]
that contradicts $m<\beta$. As a result, using also \eqref{20},
on the event $\{\theta<\iy\}$, one has
\begin{equation}\label{46}
1\cdot D^n(t)=S(2nt),\qquad t\le\theta.
\end{equation}

Next, on the time interval $[0,\tau]$,
all class-$1$ arrivals occur at time $\tau$, thus if there are
any losses at this class, they also occur at that time. Thus, by \eqref{50},
\begin{equation}\label{47}
Q^n_1(t)=Q^n_1(0)-B^n_1(t),\qquad t\in[0,\tau).
\end{equation}
As for $Q^n_2$, the same is true regarding the interval $[0,\frac{\tau}{2})$. Thus
\[
Q^n_2(t)=Q^n_2(0)-B^n_2(t),\qquad t\in[0,\frac{\tau}{2}).
\]
Hence
\[
1\cdot Q^n(\frac{\tau}{2}-)=1\cdot Q^n(0)-1\cdot B^n(\frac{\tau}{2}-)
=1\cdot Q^n(0)-S(m),
\]
where we used \eqref{19} and \eqref{46}.
Now, using the fact that each queue length is bounded above
by $\beta$, it follows from the property of the policy to always offer
service to the longer queue
that, for any $\ell\in\N$, once $2\ell$ jobs are removed from the buffers and sent
to service, each of the queue lengths is bounded above by
$\beta-\ell$. As we have just argued, on the event $\{\theta<\iy\}$
there are $S(m)$ such removals during $[0,\tau/2)$, hence
\[
Q^n_1(\frac{\tau}{2}-)\vee Q^n_2(\frac{\tau}{2}-)\le\beta-\Big\lfloor\frac{S(m)}{2}\Big\rfloor.
\]
If indeed $\theta<\iy$, namely, $Q^n_2$ reaches $\beta$ during $[\frac{\tau}{2},\tau)$,
then there must exist a time $u\in[\tau/2,\theta]$ such that
\[
Q^n_2(u-)=\beta-\Big\lfloor\frac{S(m)}{2}\Big\rfloor,
\qquad \beta-\Big\lfloor\frac{S(m)}{2}\Big\rfloor<Q^n_2(t)<\beta,\, t\in[u,\theta).
\]
Using \eqref{50}, noting there are no losses on this interval,
\[
\Big\lfloor\frac{S(m)}{2}\Big\rfloor
=Q^n_2(\theta)-Q^n_2(u-)=E^n_2(\theta)-E^n_2(u-)-B^n_2(\theta)+B^n_2(u-).
\]
Also $Q^n_2>Q^n_1$ must hold on the interval $[u,\theta)$,
since by \eqref{47}, $Q^n_1$ can only decrease from $Q^n_1(\frac{\tau}{2}-)$.
Thus the increment of $B^n_2$ equals that of $1\cdot B^n$. In turn, using \eqref{19}
and the fact that $1\cdot\Ps^n=n$ on this interval, this increment
is equal to the increment of
$1\cdot D^n$, which, by \eqref{46} is given by $S(2n\theta)-S(2nu-)$.
We thus obtain
\[
\Big\lfloor\frac{S(m)}{2}\Big\rfloor=E^n_2(\theta)-E^n_2(u-)-S(2n\theta)+S(2nu-).
\]
By \eqref{40},
\[
|E^n_2(\theta)-E^n_2(u-)-2n(\theta-u)|\le 3.
\]
Recalling that, on $\theta<\iy$, $u,\theta\in[\tau/2,\tau]$, it follows that
\[
\PP^n_x(\theta<\iy)\le\PP\Big(\sup_{s,t\in[m,2m]}|S(t)-S(s)-(t-s)|\ge \frac{S(m)}{2}-5\Big).
\]
Denoting $\bar S^m(t)=\frac{S(mt)-mt}{m}$, we have
\begin{align*}
\PP^n_x(\theta<\iy) &\le \PP\Big(\frac{S(m)}{2}-5<\frac{m}{4}\Big)
+\PP\Big(\sup_{s,t\in[1,2]}|\bar S^m(t)-\bar S^m(s)|\ge\frac{1}{4}\Big)\\
&\le\PP\Big(\bar S^m(1)<-\frac{1}{2}+\frac{10}{m}\Big)
+\PP\Big(\sup_{t\in[0,2]}|\bar S^m(t)|\ge\frac{1}{8}\Big).
\end{align*}
Note that the expression on the RHS does not depend on $x$. Moreover, by the
sample path large deviations principle satisfied by $\bar S^m$, each
of the two terms above is bounded by $c_1e^{-c_2m}$, for constants $c_1,c_2>0$ that do not depend
on $m$. This completes the proof of \eqref{31}. As we have argued above, this gives $\hat R^n_2\To0$.

Based on the above, the completion of the proof follows closely along the lines of
Section 4 of \cite{ata-sah-1}. Thus, for this part, we only provide a sketch.
First, the model \eqref{20} for departures, based on the primitive data $S$, can alternatively
be represented in terms of a pair of potential service processes, namely two rate-1
Poisson processes $S_1$ and $S_2$, that are mutually independent, and independent of
the system's initial condition:
\[
D^n_i(t)=S_i\Big(\mu_i\int_0^t\Ps^n_i(u)du\Big).
\]
Next, the balance equations \eqref{50} and \eqref{19} translate to the diffusion scale as
\begin{align}\label{41}
&\hat{Q}^n_i(t)= \hat{Q}^n_i(0)+\hat{E}^{n}_i(t)-\hat{B}^n_i(t)-\hat{R}^n_i(t)\,,\\ \label{42}
&\hat{\Ps}^n_i(t)=\hat{\Ps}^n_i(0)+\hat{B}^n_i(t)
-\hat{S}^n_i\Big(\mu_i\int_0^t\bar{\Ps}^n_i(u)du\Big)-\mu_i\int_0^t\hat{\Ps}^n_i(u)du\,,
\end{align}
where
\[
\bar\Ps^n_i=n^{-1}\Ps^n_i,
\qquad
\hat S^n_i=n^{-1/2}(S_i(n\io)-n\io),
\qquad
\hat B^n_i=n^{-1/2}(B^n_i-n\la_i\io).
\]
Hence
\begin{align}\label{43}
\hat{X}^n_i=\hat{Q}^n_i+\hat\Ps^n_i
=\hat X^n_i(0)+\hat W^n_i-\mu_i\int_0^\cdot(\hat X^n_i(u)-\hat Q^n_i(u))du-\hat R^n_i,
\end{align}
where
\begin{align}\label{51}
\hat{W}_i^n=\hat{E}^n_i-\hat{S}_i^n\Big(\mu_i\int_0^\cdot\bar{\Ps}_i^n(u)du\Big).
\end{align}
Fix a sequence $k_n$, $n\in\N$, such that
$\lim n^{-1/2}k_n=\iy$ and $\lim n^{-1}k_n=0$, and, given $T<\iy$, define
$T_n=\inf\{t:1\cdot R^n(t)\ge k_n\}\w T$.
Lemma 4.2 of \cite{ata-sah-1} states that, for $i=1,2$,
$\|\hat{Q}^n_i-N^{-1}(1\cdot\hat{X}^n)^+\|_{T_n}\to 0$,
and $\|\bar{\Ps}^n_i(t)-\rho_i\|_{T_n}\to 0$, in probability, as $n\to\iy$.
In the proof of Proposition 4.3 of \cite{ata-sah-1} it is shown that
$\PP(T_n<T)\to0$ as $n\to\iy$. As a result, in the above two statements, $T_n$
can be replaced by $T$, namely, for any $T<\iy$, for $i=1,2$,
\begin{equation}
  \label{101}
\|\hat{Q}^n_i-N^{-1}(1\cdot\hat{X}^n)^+\|_T\to 0,
\qquad
\|\bar{\Ps}^n_i-\rho_i\|_T\to 0,
\qquad
\text{in probability, as } n\to\iy.
\end{equation}
By the central limit theorem, ($\hat S^n_1,\hat S^n_2)\To W$, where $W$ is
a $(0,A)$-BM, with $A=\diag(1,1)$. Since $\mu_i=2$ and $\rho_i=1/2$,
it follows that $\hat W^n\To W$.

Define $\Gam:\mathbb{D}_{\mathbb{R}^2}([0,T])\rightarrow \mathbb{D}_{\mathbb{R}^2}([0,T])$ by
\begin{equation}\label{100}
\Gam(f)(t)=f(t)-g(t)\be_1\,,
\qquad
g(t)=\sup_{0\leq u\leq t}(2\beta-1\cdot f(u))^-\,.
\end{equation}
The following two properties follow directly from the definition, namely
there exists a constant $C$ such that
\begin{equation}
  \label{60}
  \|\Gam(f)-\Gam(\tilde f)\|_T\le C\|f-\tilde f\|_T,\qquad f,\tilde f\in\mathbb{D}_{\R^2}([0,T]),
\end{equation}
and
\begin{equation}
  \label{62}
  w_T(\Gam(f),\cdot)\le Cw_T(f,\cdot),\qquad f\in\mathbb{D}_{\R^2}([0,T]).
\end{equation}
Given $z\in\mathbb{D}_{\R^2}$, $z(0)\in G:=\{x\in\R^2:1\cdot x\le 2\beta\}$, we say that
$(y,\ell)\in\mathbb{D}_{\R^2}\times
\mathbb{D}_{\R}$ solves the Skorohod problem (SP)
in $G$, with reflection in the direction $-\be_1$,
for data $z$, if $y(t)\in G$ for all $t$, $\ell$ is nonnegative and nondecreasing, and
\[
y=z-\ell\be_k,\qquad\int_{[0,\iy)}1_{\{1\cdot y<2\beta\}}d\ell=0.
\]
It is well known that for $z$ as above, a necessary and sufficient condition for $(y,\ell)$
to be a solution is that $y=\Gam(z)$.

Based on the fact that $\hat R^n_2\To0$ and \eqref{101}, there exists a process
$\tilde X^n$ such that $\tilde X^n-\hat X^n\To0$,
$\tilde X^n(t)\in G$ for all $t$, and
\[
\tilde X^n=\hat X^n(0)+\hat W^n+\int_0^\cdot b(\tilde X^n(u))du-\hat{R}^n_1\be_1+\eps^n,
\]
where $\eps^n$ is a sequence of processes converging to $0$ in probability, and
$\int 1_{\{1\cdot\tilde X^n < 2\beta\}}d\hat{R}^n_1 = 0$.
As a result,
\begin{align}
\tilde X^n={\Gam}\Big(\hat X^n(0)+\hat W^n
+\int_0^{\cdot}{b}(\tilde X^n(u))du+\eps^n\Big).
\end{align}
Taking limits, using properties \eqref{60} and \eqref{62} gives the convergence result.
\qed

\section{A limit result in conventional heavy traffic}\label{sec3}

In this section we show that in conventional heavy traffic, the first and second order
data of the primitives fully determine the diffusion-scale behavior, and in particular,
the diffusion limit exists. The purpose of presenting this result is mainly to contrast it with
the previous section's counterexample.
An important distinction between the two regimes is that the HW regime gives rise to
a nondegenerate $N$-dimensional diffusion process (such as \eqref{61}), whereas
in the conventional regime the limit is a 1-dimensional diffusion.
It therefore comes as no surprise that the reflection due to the buffer size constraint
can only occur according to the 1-dimensional Skorohod map.
While the result appears to be standard, we have not been able to find it in the literature.

The model is similar to the one considered in Section \ref{sec2}, but has only
one sever. The probabilistic assumptions regarding arrivals are as before,
namely they satisfy \eqref{12} and \eqref{13}.
The service time distribution is general.
With $S^n_i$ denoting the potential service counting process for class-$i$ customers,
it is assumed, analogously to \eqref{12} and \eqref{13}, that $n^{-1}S^n_i\To\mu_i\io$,
and $\hat S^n_i:=n^{-1/2}(S^n_i-n\mu_i\io)\To W^{\rm ser}_i$, where
$W^{\rm ser}_i$ is a $(0,\mu_i\gamma_i^2)$-BM, and $\mu_i>0$, $\gamma_i\ge0$ are constants.
For each $n$, the $2N$ processes $(A^n_i,S^n_i)$ are mutually independent.

As before, the sequence of queueing networks approaches heavy traffic,
i.e., the limiting traffic intensity $\sum \rho_i=1$, where $\rho_i=\lambda_i/\mu_i$,
the scheduling is according to SLQ, and server is non-idling.
We also assume that the system is initially empty.
The number of class-$i$ customers in the system at time $t$ is denoted by $X^n_i(t)$.
If $T^n_i(t)$ is the service time devoted to class-$i$ customers up to time $t$
and $R^n_i(t)$ counts the number of lost arrivals up to time $t$ then we have
\begin{align}
X^n_i(t)= E^n_i(t)-S^n_i(T^n_i(t))- R^n_i(t)\,.
\end{align}
The $i$th buffer size is given by $\beta^n_i=\beta n^{1/2}+\eps^n_in^{1/2}$,
where $\eps^n_i\to0$ for each $i$, and $\beta>0$ is a constant.
Denote the diffusion-scale versions of the processes by
$\hat{X}^n_i=n^{-1/2}X^n_i$ and $\hat{R}^n_i=n^{-1/2}R^n_i$.
Straightforward calculation gives
\begin{align}\label{systeq}
\hat{X}^n_i=\hat{W}^n_i+\hat{Y}^n_i-\hat{R}^n_i,
\end{align}
where
\begin{equation}\label{96}
\hat{W}^n_i(t)=\hat{E}^n_i(t)-\hat{S}^n_i(T^n_i(t))+\hat\lambda^n_it\,,
\qquad
\hat{Y}^n_i=\mu_in^{1/2}(\rho_i \io - T^n_i)\,,
\end{equation}
and $\hat\la^n_i:= (\la^n_i-n\lambda_i)n^{-1/2} \to\hat\la_i$, by the assumption
made following equation \eqref{13}.
The following is often referred to as a {\it state space collapse} result.
\begin{lemma}\label{lem1}
The scaled number of customers in the various classes are asymptotically equal. Namely,
$\max_{i,j}\|\hat{X}^n_i-\hat{X}^n_j\|_T\Rightarrow 0$, for any $T < \infty$.
\end{lemma}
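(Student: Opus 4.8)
\noindent The plan is to reduce the state space collapse to a quantitative bound on the diffusion-scaled \emph{spread}. Set $\psi^n(t):=\max_i X^n_i(t)-\min_i X^n_i(t)\ge0$, so that $\max_{i,j}\|\hat X^n_i-\hat X^n_j\|_T=n^{-1/2}\|\psi^n\|_T$; since convergence in distribution to the constant $0$ is equivalent to convergence in probability, it suffices to prove $\PP(\|\psi^n\|_T>\eps n^{1/2})\to0$ for every $\eps>0$. Note $\psi^n(0)=0$ because the system starts empty. First I would record the driving noise: from \eqref{96}, $\hat W^n_i=\hat E^n_i-\hat S^n_i(T^n_i(\cdot))+\hat\la^n_i\io$, and by \eqref{13} together with the FCLT for $S^n_i$ the processes $\hat E^n_i,\hat S^n_i$ are $C$-tight; since the single server forces $\sum_iT^n_i$ to be $1$-Lipschitz, the random time changes $\hat S^n_i(T^n_i(\cdot))$ remain $C$-tight. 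Hence $\{\hat W^n\}$ is $C$-tight, so $\|\hat W^n\|_T$ is tight and $\lim_{\delta\to0}\limsup_n\PP(w_T(\hat W^n,\delta)>\eta)=0$ for every $\eta>0$.

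\noindent The core is a restoring-drift estimate: there are constants $c,C,c_3>0$ (independent of $n$) so that on any interval $[u,t]\subseteq[0,T]$ along which $\psi^n\ge\tfrac12\eps n^{1/2}$,
\begin{equation}\label{plan-drift}
\psi^n(t)\le\psi^n(u)-cn(t-u)+Cn^{1/2}\,w_T(\hat W^n,t-u)+c_3 .
\end{equation}
Granting \eqref{plan-drift}, I would run an excursion argument. On $\{\|\psi^n\|_T>\eps n^{1/2}\}$ let $t_0=\inf\{t:\psi^n(t)>\eps n^{1/2}\}$ and $u=\sup\{s\le t_0:\psi^n(s)\le\tfrac12\eps n^{1/2}\}$; then $\psi^n\ge\tfrac12\eps n^{1/2}$ on $(u,t_0]$, $\psi^n(u)\le\tfrac12\eps n^{1/2}+c_3$ (jumps are $O(1)$), and $\psi^n(t_0)\ge\eps n^{1/2}$. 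Feeding these into \eqref{plan-drift} and using $w_T(\hat W^n,t_0-u)\le2\|\hat W^n\|_T$ gives first $t_0-u\le(C/c)\|\hat W^n\|_T\,n^{-1/2}$, so the excursion is \emph{short}; substituting this window length back (and dropping the negative drift term) yields $w_T(\hat W^n,\delta_n)\ge\eps/(3C)$ for large $n$, where $\delta_n:=(C/c)\|\hat W^n\|_Tn^{-1/2}$. On $\{\|\hat W^n\|_T\le K\}$ one has $\delta_n\le(C/c)Kn^{-1/2}\to0$, so the modulus control of the previous paragraph forces that probability to $0$; letting $K\to\infty$ handles the complementary event, and the claim follows.

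\noindent It remains to justify \eqref{plan-drift}. The mechanism is transparent. On the excursion, $\psi^n>0$, so the server (non-idling) is busy and the set $B$ of longest queues is a \emph{proper} subset of $\{1,\dots,N\}$; the shortest class is strictly below the longest, hence receives no service and rises at count-rate $\ge\la_{\min}n$, and, being strictly below the common buffer level $\beta^n$, incurs no losses. Meanwhile the server devotes its full effort to $B$, and the critical-load condition gives $\sum_{i\in B}\rho_i<1$ for every proper $B$; the aggregate arrival-minus-service count-drift of the balanced top group is therefore $\delta_Bn$ with $\delta_B=(\sum_{i\in B}\rho_i-1)/\sum_{i\in B}\mu_i^{-1}<0$, uniformly $\le-c_0<0$ over the finitely many proper subsets. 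Writing each $X^n_i$ via its $T^n_i$-drift plus the centered term $n^{1/2}\hat W^n_i$ (losses only at the common buffer, which can only push the maximum \emph{down}), these two effects combine to \eqref{plan-drift} with $c=c_0$.

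\noindent The main obstacle is exactly this last step for general $N$: converting ``the longest-count group falls at fluid rate $c_0n$'' into a pathwise estimate when the single server ping-pongs among tied longest classes and the identities of the extremal classes switch. For $N=2$ this is immediate, since whenever $X^n_1>X^n_2$ class $1$ is the unique served class and the gap has explicit negative drift $-(\mu_1\rho_2+\la_2)n$ (and symmetrically), so \eqref{plan-drift} holds with no further work. For general $N$ I would either partition each excursion into maximal subintervals on which the longest-set and shortest-set are constant and telescope the resulting estimates, or—to sidestep the identity-switching altogether—replace $\max_i$ (resp.\ $\min_i$) by the smooth surrogate $\gamma^{-1}\log\sum_i e^{\gamma X^n_i}$, whose infinitesimal drift concentrates on the top classes and is dominated by the service term through the inequality $\sum_{i\in B}\rho_i<1$, and then let the smoothing parameter $\gamma=\gamma_n$ grow slowly with $n$. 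Either route reduces \eqref{plan-drift} to the critical-load arithmetic above; I expect the bookkeeping of ties and of the boundary losses to be where the care is needed.
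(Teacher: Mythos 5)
Your reduction is sound, and it is in fact the mechanism that the paper's own (one-line) proof rests on: the paper proves Lemma \ref{lem1} purely by citing Proposition 1 of \cite{mieghem} ``with minor modifications for finite buffers,'' and that kind of state-space-collapse proof is exactly the restoring-drift/short-excursion scheme you set up. Your preliminary steps are correct: the equivalence with convergence in probability, the $C$-tightness of $\hat W^n$ (using that $\sum_i T^n_i$ is $1$-Lipschitz), the excursion argument that deduces the lemma from the drift bound $\psi^n(t)\le\psi^n(u)-cn(t-u)+Cn^{1/2}w_T(\hat W^n,t-u)+c_3$, and the finite-buffer observation that on an excursion the minimal class is strictly below its buffer (hence suffers no losses) while losses at maximal classes only push the spread down --- the latter being precisely the ``minor modification'' the paper alludes to.

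The genuine gap is that the restoring-drift bound itself --- the heart of the proof --- is established only for $N=2$, while the lemma (and the theorem it feeds) concerns general $N$; you acknowledge this, but neither repair you propose survives scrutiny. Telescoping over maximal subintervals on which the extremal sets are constant fails by accounting: on an excursion of length $\delta$ those sets can switch $O(n\delta)$ times (essentially at every arrival or departure among near-extremal classes), each subinterval estimate carries an $O(1)$ error term (and a noise-increment term whose absolute values you would be summing), so the accumulated error is of order $n\delta$ --- the same order as the drift gain $cn\delta$ you are trying to extract. The soft-max route fails pointwise: the server can devote its effort to only one class among the tied top ones, and $\sum_{i\in B}\rho_i<1$ does \emph{not} make the instantaneous drift negative. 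Concretely, take $N=3$, $\mu_1=10$, $\la_1=1$, $\mu_2=1$, $\la_2=0.8$, $\rho_3=0.1$: when classes $1$ and $2$ are tied at the top and the server is working on class $2$, the max (and the soft max, whose weights are $\approx(\tfrac12,\tfrac12,0)$) has instantaneous drift $\approx\tfrac12(\la_1+\la_2-\mu_2)n=0.4n>0$. The negativity coming from $\sum_{i\in B}\rho_i<1$ is an \emph{averaged} effect: it emerges only after the SLQ dynamics re-balance effort across the tied classes, and making that averaging rigorous (via a fluid-window/hydrodynamic argument, or the argument actually carried out in \cite{mieghem}) is exactly the missing content. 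A smaller omission in the same vein: with non-preemptive service the class currently in service need not be a currently-longest class, so ``the shortest class receives no service'' itself needs an $O(1)$ correction --- harmless for $N=2$, where only one residual service can be involved, but part of the bookkeeping that a general-$N$ proof must absorb.
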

\begin{proof}
The proof follows along the lines of Proposition 1 in \cite{mieghem}, with minor modifications for finite buffers.
\qed
\end{proof}

\skp

For $a>0$, the Skorohod map on the interval $[0,a]$ will be
denoted by $\Gam_{[0,a]}$. It maps $\D_\R$ to itself,
and is characterized as the first component of
the solution map $\psi\to(\ph,\eta_1,\eta_2)$ to the
problem of finding, for a given $\psi$, a triplet $(\ph,\eta_1,\eta_2)$, such that
\[
\begin{split}
&\ph=\psi+\eta_1-\eta_2,\qquad \ph(t)\in[0,a] \text{ for all } t,\\
&\text{$\eta_i$ are nonnegative and nondecreasing, $\eta_i(0-)=0$,
and}
\\
&\text{$\int_{[0,\iy)}1_{(0,a]}(\ph)d\eta_1=\int_{[0,\iy)}1_{[0,a)}(\ph)d\eta_2=0$.}
\end{split}
\]
Existence and uniqueness of solutions are well-known (see eg.\ \cite{KLRS}).

Denote $\alpha=\big(\sum_{i=1}^N\mu_i^{-1}\big)^{-1}$.
Let $\tilde W$ be a (one-dimensional) $(\tilde m,\tilde A)$-BM, where
$\tilde m=\al\sum_{i=1}^N\frac{\hat{\lambda}_i}{\mu_i}$ and
$\tilde A=\al^2\sum_{i=1}^N\frac{\lambda_i}{\mu_i^2}(\sigma^2_i+\gamma_i^2)$.
Then the process $\tilde X:=\Gam|_{[0,\beta]}(\tilde W)$ is a reflected BM on $[0,\beta]$.

\begin{theorem}
We have $(\hat{X}^n_1, \ldots, \hat{X}^n_N)\Rightarrow (\tilde X,\ldots,\tilde X)$.
\end{theorem}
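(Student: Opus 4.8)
\noi The plan is to reduce the $N$-dimensional convergence to a one-dimensional reflected-diffusion limit, exploiting the state space collapse of Lemma \ref{lem1} to collapse all coordinates onto a single process governed by the Skorohod map $\Gam|_{[0,\beta]}$. Since $\al\sum_{i=1}^N\mu_i^{-1}=1$, the weighted average
\[
Z^n:=\al\sum_{i=1}^N\mu_i^{-1}\hat X^n_i
\]
is a convex combination of the coordinates, so by Lemma \ref{lem1} one has $\|Z^n-\hat X^n_j\|_T\To0$ for every $j$, and it suffices to prove $Z^n\To\tilde X$. Combining \eqref{systeq} and \eqref{96}, writing $U^n:=\io-\sum_iT^n_i\ge0$ for the cumulative idle time (nondecreasing, increasing only when $\sum_iX^n_i=0$), and using the critical load condition $\sum_i\rho_i=1$, I would obtain
\[
Z^n=\psi^n+\al\hat U^n-\al\sum_{i=1}^N\mu_i^{-1}\hat R^n_i,
\qquad
\psi^n:=\al\sum_{i=1}^N\mu_i^{-1}\hat W^n_i,\quad \hat U^n:=n^{1/2}U^n,
\]
since $\al\sum_i\mu_i^{-1}\hat Y^n_i=\al n^{1/2}(\io-\sum_iT^n_i)=\al\hat U^n$.

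First I would establish the fluid behaviour $T^n_i\To\rho_i\io$ and $n^{-1}R^n_i\To0$, u.o.c.\ in probability. This follows from the a priori bound $0\le X^n_i\le\beta^n_i+O(1)=o(n)$, forcing $n^{-1}X^n_i\To0$, together with work conservation under critical load: the server has no spare capacity at fluid scale, so idle time and losses are negligible there and $T^n_i(t)\to\rho_it$. Granting this, a random time-change applied to $\hat S^n_i(T^n_i(\cdot))$ yields $\hat S^n_i(T^n_i(\cdot))\To W^{\rm ser}_i(\rho_i\cdot)$, which is a $(0,\la_i\gamma_i^2)$-BM. With $\hat E^n_i\To W^{\rm arr}_i$, $\hat\la^n_i\to\hat\la_i$, the mutual independence of the primitives, and \eqref{96}, this gives $\hat W^n_i\To$ a $(\hat\la_i,\la_i(\sig_i^2+\gamma_i^2))$-BM, independent across $i$. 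Hence $\psi^n\To\tilde W$, the drift being $\al\sum_i\mu_i^{-1}\hat\la_i=\tilde m$ and, by independence, the variance rate being $\al^2\sum_i\mu_i^{-2}\la_i(\sig_i^2+\gamma_i^2)=\tilde A$.

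Next I would match the two reflection terms with the constraints of the Skorohod problem on $[0,\beta]$. The term $\al\hat U^n$ is nonnegative, nondecreasing, and increases only when $\sum_iX^n_i=0$, at which times all $\hat X^n_i=0$ and hence $Z^n=0$; this is the lower reflection at $0$. The term $\al\sum_i\mu_i^{-1}\hat R^n_i$ is nonnegative and nondecreasing, and $\hat R^n_i$ increases only when buffer $i$ is full, at which times the number in buffer equals $\beta^n_i$ and so $\hat X^n_i=\beta+\eps^n_i+o(1)=\beta+o(1)$, the number in service being $O(1)$; by Lemma \ref{lem1} all coordinates, and therefore $Z^n$, are then within $o(1)$ of $\beta$, giving the upper reflection at $\beta$ up to a vanishing error. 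Moreover $Z^n\in[0,\beta+o(1)]$, since $0\le\hat X^n_i\le\beta+\eps^n_i+o(1)$. Thus $(Z^n,\al\hat U^n,\al\sum_i\mu_i^{-1}\hat R^n_i)$ solves the Skorohod problem on $[0,\beta]$ for the data $\psi^n$, up to complementarity errors vanishing in probability.

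Finally, I would invoke the Lipschitz continuity of the two-sided Skorohod map $\Gam|_{[0,\beta]}$ in the uniform norm (see \cite{KLRS}), in its perturbed form to absorb the vanishing errors, yielding $\|Z^n-\Gam|_{[0,\beta]}(\psi^n)\|_T\To0$; the reflection terms are automatically tight, being controlled by the oscillation of $\psi^n$. Combined with $\psi^n\To\tilde W$ and the continuous mapping theorem, this gives $Z^n\To\Gam|_{[0,\beta]}(\tilde W)=\tilde X$, and then $\hat X^n_j\To\tilde X$ for each $j$ by Lemma \ref{lem1}, proving the claim. The main obstacle is making the upper-boundary complementarity rigorous: one must show simultaneously that losses accrue only when $Z^n$ lies within $o(1)$ of $\beta$ (coupling the diffusion-scale loss process with the $o(n^{1/2})$ agreement between buffer content and number in system and with the state space collapse), and that the resulting approximate Skorohod problem is stable under these errors.
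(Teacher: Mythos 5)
Your overall architecture matches the paper's: pass to the weighted average $\al\sum_i\mu_i^{-1}\hat X^n_i$, identify the idleness and loss terms as the two reflection terms of a Skorohod problem on (approximately) $[0,\beta]$, and conclude via Lipschitz continuity of the two-sided Skorohod map and continuous mapping. Your worry about "approximate complementarity" at the upper boundary is handled in the paper by a clean device: truncate, $X^{*,n}=\tilde X^n\wedge a_n$ with $a_n=\beta-\delta_n-\bar\delta_n$, so that an \emph{exact} Skorohod problem on $[0,a_n]$ holds on the high-probability event $\Om_n$, and then use the estimate $\|\Gam_{[0,a_1]}(\psi)-\Gam_{[0,a_2]}(\psi)\|_T\le a_2-a_1$ from \cite{KLRS} to pass to the fixed interval $[0,\beta]$.

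The genuine gap is your first step: you claim $T^n_i\To\rho_i\io$ and $n^{-1}R^n_i\To0$ can be established \emph{first}, by a fluid-scale "work conservation under critical load" argument, and you then use this to identify $\psi^n\To\tilde W$ before any Skorohod-map machinery. With buffers of size $O(n^{1/2})$ this fluid reasoning does not close. At fluid scale, both the empty state (where idleness accrues) and the full-buffer states (where losses accrue) collapse to the same state $\bar X=0$, so the complementarity conditions become vacuous: the fluid equations only give $\bar T_i=\rho_i\io-\bar R_i/\mu_i$ and $(\text{fluid idle time})=\sum_i\bar R_i/\mu_i$, and they admit spurious solutions with both sides positive (e.g.\ $\bar R_i=\la_i\io$, $\bar T_i\equiv0$: all arrivals lost while the server idles). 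Excluding these requires diffusion-scale control, e.g.\ showing that the number of empty-to-full traversals on $[0,T]$ is $O_P(1)$ rather than the a priori $O(n^{1/2})$; a crude excursion count gives fluid loss $O(1)$, not $o(1)$. This is precisely why the paper runs your steps in the opposite order: $\tilde W^n$ is shown to be $C$-tight \emph{without} knowing the limit of $T^n_i$ (only its $1$-Lipschitz property is used, so that $\hat S^n_i\circ T^n_i$ inherits the modulus of continuity of $\hat S^n_i$); the Skorohod representation and the Lipschitz map then give $C$-tightness of $(\tilde X^n,\tilde Y^n,\tilde R^n)$; and only afterwards is $T^n_i\To\rho_i\io$ deduced, from $C$-tightness of $\hat Y^n_i=\mu_in^{1/2}(\rho_i\io-T^n_i)$ via \eqref{systeq} and \eqref{96}, which finally yields $\tilde W^n\To\tilde W$ and the conclusion. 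Your proof becomes correct if reordered in the same way; as written, the identification of the driving Brownian motion rests on an unproved (and not fluid-provable) claim.
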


\begin{proof}
Define $\tilde{X}^n=\al\sum_{i=1}^N\mu_i^{-1}\hat{X}^n_i$.
It follows from Lemma \ref{lem1} that there exists a sequence $\bar{\delta}_n\rightarrow 0$,
such that, with
$$
\Om_n=\{\max_i\|\hat{X}^n_i-\tilde{X}^n\|_T< \bar{\delta}_n\},
$$
one has $\mathbb{P}(\Om_n)\rightarrow 1$ as $n\rightarrow \infty$. Now, by \eqref{systeq},
$\tilde{X}^n=\tilde{W}^n+\tilde{Y}^n-\tilde{R}^n$,
where
\[
\tilde{W}^n=\al\sum_{i=1}^N \frac{\hat{W}^n_i}{\mu_i},
\qquad
\tilde{Y}^n=\al n^{1/2}\Big(\io-\sum_{i=1}^N T^n_i\Big),
\qquad
\tilde{R}^n=\al\sum_{i=1}^N \frac{\hat{R}^n_i}{\mu_i}.
\]
Note that $t-\sum_{i=1}^NT_i^n(t)$ gives the cumulative idleness time of the server by time
$t$. As a result, the process $\tilde{Y}^n$ is non-decreasing, $\tilde{Y}^n(0)=0$,
and by the non-idling condition, increases only when $\tilde{X}^n=0$.
Moreover, $\tilde{R}^n$, is non-decreasing, starts from $0$ and since arriving jobs
are lost only when the corresponding buffer is full, this process increases only when
$\max_i\hat{X}^n_i\geq \beta-\delta_n$, where we denote $\delta_n=\max_i|\eps^n_i|$.
As a result, on the event $\Om_n$,
$\tilde{R}^n$ increases only when $\tilde{X}^n \geq a_n:=\beta-\delta_n- \bar{\delta}_n$.
On $\Om_n$ we have
\begin{align*}
\tilde{X}^n(t)&=\alpha\sum_{i=1}^N\frac{\hat{X}^n_i(t)}{\mu_i}
\leq \alpha\frac{1}{\alpha}(\beta+\delta_n)=\beta+\delta_n\,.
\end{align*}
Defining $X^{*,n}=\tilde{X}^n\wedge a_n$,
we have $X^{*,n}=\tilde{X}^n+e^n_1$, where $e^n_1$ is a process that
satisfies $|e^n_1(t)|\leq 2\delta_n+\bar{\delta}_n$ for all $t$, on $\Om_n$.
Since $\PP(\Om_n)\to1$, $e^n_1$ converges to zero in probability.
By the discussion above, we also have on $\Om_n$,
\[
X^{*,n}=e^n_1+\tilde{W}^n+\tilde{Y}^n-\tilde{R}^n,
\qquad
X^{*,n}(t)\in[0,a_n] \text{ for all } t,
\]
\[
\int_{[0,\iy)}1_{(0,a_n]}(X^{*,n})d\tilde Y^n=\int_{[0,\iy)}1_{[0,a_n)}(X^{*,n})d\tilde R^n=0.
\]
As a result, $X^{*,n}=\Gamma_{[0,a_n]}(e^n_1+\tilde{W}^n)$ on $\Om_n$.
It follows from the explicit expression for the Skorohod map, provided in \cite{KLRS},
that $\|\Gam_{[0,a_1]}(\psi)-\Gam_{[0,a_2]}(\psi)\|_T\le a_2-a_1$, for any
$T<\iy$, $0<a_1<a_2<\iy$ and $\psi$. As a result,
$X^{*,n}=\Gamma_{[0,\beta]}(e^n_1+\tilde{W}^n)+e^n_2$, holds on $\Om_n$,
where $\|e^n_2\|_T\le\del_n+\bar\del_n$. Hence, on all of $\Om$,
\begin{equation}\label{97}
\tilde X^n=\Gamma_{[0,\beta]}(e^n_1+\tilde{W}^n)+e^n_3,
\end{equation}
where $e^n_3$ converges to zero in probability.
By \eqref{96} and the assumed convergence of the processes $\hat E^n_i$, $\hat S^n_i$
and constants $\hat\la^n_i$, it follows that $\tilde{W}^n$
is a $C$-tight sequence of processes. As a result of relation \eqref{97} and
the continuity of $\Gam_{[0,\beta]}$ as a map from $\D_\R([0,T])$
(for arbitrary $T$), equipped with the uniform topology, to itself,
$(\tilde X^n,\tilde{Y}^n,\tilde{R}^n)$ is also a $C$-tight sequence.
Hence we obtain from \eqref{96} that $T^n_i\Rightarrow \rho_i \io$.
It follows that $\tilde W^n\To\tilde W$. Arguing again by
the continuity of the Skorohod map, we obtain
$\tilde X^n\Rightarrow \Gamma_{[0,\beta]}(\tilde{W})$. The result now follows.
\qed
\end{proof}

\skp

\noi{\bf Acknowledgment.} The authors are grateful to the two referees for their valuable comments.

\footnotesize

\bibliographystyle{is-abbrv}




\end{document}